\theoremstyle{plain}
\newtheorem{thm}{Theorem}[section]
\newtheorem{theorem}[thm]{Theorem}
\newtheorem{lemma}[thm]{Lemma}
\newtheorem{corollary}[thm]{Corollary}
\newtheorem{proposition}[thm]{Proposition}
\theoremstyle{definition}
\newtheorem{definition}[thm]{Definition}
\numberwithin{equation}{section}
\newcommand{\sC}{{\mathcal C}}
\newcommand{\sK}{{\mathcal K}}
\newcommand{\sU}{{\mathcal U}}
\newcommand{\sZ}{{\mathcal Z}}
\newcommand{\C}{{\mathbb C}}
\newcommand{\BP}{{\mathbb P}}
\title[Equivariant compactification]{Uniqueness of
equivariant compactifications of $\C^n$ by a Fano
manifold of Picard number 1}
\author{Baohua Fu and Jun-Muk Hwang}
\thanks{Baohua Fu is supported by NSFC(11031008 and 11225106) and the KIAS Scholar Program.  Jun-Muk Hwang is supported by
National Researcher Program 2010-0020413 of NRF and MEST}
\begin{document}
\maketitle  \setcounter{tocdepth}{1}

\begin{abstract}
Let $X$ be an $n$-dimensional Fano manifold of Picard number 1. We study
how many different ways $X$ can compactify the complex vector group $ \C^n$
equivariantly. Hassett and Tschinkel showed that when  $X = \BP^n$ with $n \geq 2$,
 there are many distinct ways that $X$ can be realized as
equivariant compactifications of $\C^n$. Our result says that
projective space is an exception: among Fano manifolds of Picard
number 1 with smooth VMRT, projective space is the only one compactifying $\C^n$
equivariantly in more than one ways. This answers questions raised
by Hassett-Tschinkel and Arzhantsev-Sharoyko.
\end{abstract}

\section{Introduction}
Throughout, we will work over complex numbers. Unless stated
otherwise, an open subset is in classical topology.

\begin{definition}\label{d.SEC} Let $G =\C^n$ be the complex vector group
of dimension $n$.   An {\em equivariant
compactification}
 of $G$  is    an algebraic $G$-action
 $A: G \times X \to X$ on a projective variety $X$ of dimension $n$
  with a Zariski open orbit $O \subset X$.   In particular, the orbit $O$
  is $G$-equivariantly
  biregular to $G$. Given a projective variety $X$,  such an action
 $A$ is called an {\em EC-structure} on $X$, in abbreviation of
 `Equivariant Compactification-structure'.
 Let  $A_1: G \times X_1 \to X_1$ and
  $A_2: G \times X_2 \to X_2$ be  EC-structures on two projective varieties $X_1$ and $X_2$. We say
  that $A_1$ and $A_2$
are {\em isomorphic} if there exist a linear automorphism $F:G \to G$ and a biregular morphism $\iota:
X_1 \to X_2$ with the commuting diagram $$
\begin{array}{ccc}
G \times X_1 & \stackrel{A_1}{\longrightarrow} & X_1 \\
(F, \iota) \downarrow & & \downarrow \iota \\
G \times X_2 & \stackrel{A_2}{\longrightarrow} & X_2.
\end{array}$$   \end{definition}

In \cite{HT},  Hassett and Tschinkel studied EC-structures on projective space
$X= \BP^n$.  They proved a correspondence between  EC-structures on $\BP^n$ and Artin local algebras of
 length $n+1$. By classifying  such algebras, they
discovered that there are many distinct isomorphism classes of
EC-structures on $\BP^n$ if $n \geq 2$ and  infinitely many of them if $n \geq 6$.
  They posed the question whether a similar phenomenon
occurs when $X$ is a smooth quadric hypersurface. This was
answered negatively in \cite{S}, using arguments along the line of
Hassett-Tschinkel's approach. A further study was made in
\cite{AS} where the authors raised the corresponding question when
$X$ is a Grassmannian. Even for  simplest examples like
the Grassmannian of lines on $\BP^4$, a direct generalization of the arguments in
\cite{HT} or \cite{S}  seems hard.

Our goal is to give a uniform conceptual answer to these questions,
as follows.

\begin{theorem}\label{t.HT} Let $X$ be a Fano manifold of dimension $n$ with Picard number 1,
 different from $\BP^n$. Assume that $X$ has a family of minimal
rational curves  whose VMRT $\sC_x \subset \BP T_x(X)$ at a
general point $x \in X$ is smooth.  Then all EC-structures
 on $X$ are isomorphic.
\end{theorem}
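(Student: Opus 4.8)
The plan is to translate an EC-structure on $X$ into infinitesimal data on the variety of minimal rational tangents, and then show that this data is rigid. An action $A: G \times X \to X$ with open orbit $O \cong G = \C^n$ gives an $n$-dimensional abelian Lie algebra $\fg$ of global vector fields on $X$, which at a general point $x \in O$ trivializes $T_x(X)$. The key classical fact (from Hassett–Tschinkel's analysis, reinterpreted) is that the commutative, associative multiplication making $\fg$ into an Artin algebra of length $n+1$ is encoded by how $\fg$ acts on $X$: for $v \in \fg$, the Lie derivative $[v, \cdot]$ restricted to $\fg$ at $x$ records the symbol of a second-order operator, and the vanishing of higher-order terms is equivalent to the Fano manifold being $\BP^n$. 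So the first step is to set up, using $\fg \subset H^0(X, T_X)$, a symmetric tensor $\mu_x \in \Sym^2 T_x^*(X) \otimes T_x(X)$ (a ``multiplication'') at a general point, governed by the second-order behavior of the flows.

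**Using the VMRT.** The heart of the argument is Cartan–Fubini type rigidity: by Hwang–Mok, a Fano manifold of Picard number 1 with smooth VMRT $\sC_x \subset \BP T_x(X)$ is determined, together with its birational and local geometry, by the projective isomorphism type of $(\BP T_x(X), \sC_x)$ at a general point; moreover any local biholomorphism preserving VMRT extends to a global automorphism. The plan is to show that the abelian subalgebra $\fg \subset H^0(X, T_X)$ must be tangent to — indeed, normalize — the structure imposed by $\sC_x$. Concretely, the open orbit $O$ carries a flat affine structure coming from $G = \C^n$; on the other hand the VMRT distribution gives another geometric structure. One shows that the multiplication tensor $\mu$ built from $\fg$ must send $\sC_x$ into itself in an appropriate sense, so that each element of $\fg$, viewed via its second-order jet, preserves the cone $\sC_x$. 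The smoothness of $\sC_x$ is exactly what makes this second-order constraint rigid: a smooth VMRT that is preserved by a large commutative algebra of ``translations'' forces the algebra to be the trivial (radical-squared-zero, hence non-existent unless $X = \BP^n$) one — wait, more precisely it forces all such structures to be conjugate under $\Aut(X)$.

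**Reducing to one orbit and concluding.** Given two EC-structures $A_1, A_2$ on $X$ with abelian algebras $\fg_1, \fg_2 \subset H^0(X, T_X)$, the goal is to produce $\varphi \in \Aut^0(X)$ with $\varphi_* \fg_1 = \fg_2$ as subspaces (then a linear change of coordinates on $\C^n$ matches the actions, giving the isomorphism of Definition 1.1). Pick general points $x_1, x_2$ in the respective open orbits. Since $\Aut^0(X)$ acts transitively enough on general points (using that $X$ is covered by minimal rational curves and the VMRT is smooth, via the Hwang–Mok prolongation/automorphism theory), one first moves $x_1$ to $x_2$. Then the two algebras give two VMRT-compatible multiplication tensors at the same point; by the rigidity established in the previous step, these are conjugate under the isotropy group $\Aut(X)_{x_2}$ acting on $(\BP T_{x_2}(X), \sC_{x_2})$. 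Composing, one gets the desired $\varphi$. Finally, one checks that conjugacy of the algebras as subspaces of $H^0(X, T_X)$, together with commutativity, yields an isomorphism of EC-structures; the linear automorphism $F: G \to G$ absorbs the remaining choice of basis identifying $\fg$ with $\C^n$.

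**Main obstacle.** The crux — and the step I expect to be hardest — is establishing that the abelian algebra $\fg$ is genuinely constrained by the VMRT, i.e. that the second-order jets of its flows preserve $\sC_x$. A priori $\fg$ is just a space of vector fields with no obvious interaction with minimal rational curves; one must exploit that the open orbit is all of $\C^n$ (so the flows are complete and polynomial) to control how the vector fields degenerate along the boundary $X \setminus O$, and then feed this into the VMRT structure, likely by analyzing minimal rational curves passing through general points of $O$ and their limits. Making precise the claim ``smooth VMRT $\Rightarrow$ the only length-$(n+1)$ algebra structure compatible with it is the $\BP^n$ one, which is excluded'' — equivalently, that all compatible structures are $\Aut(X)$-conjugate — is where the smoothness hypothesis and the full force of Cartan–Fubini extension must be invoked.
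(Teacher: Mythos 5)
Your proposal correctly identifies Cartan--Fubini extension as the engine, but the route you take through the Hassett--Tschinkel correspondence leaves the central step unproven, and you say so yourself: you never establish that the abelian algebra $\fg$ (or your multiplication tensor $\mu_x$) interacts with the VMRT at all. This is not a loose end the rest of the argument can absorb --- it is the load-bearing step --- and the mechanism you are reaching for (second-order jets of the flows preserving $\sC_x$, rigidity of ``VMRT-compatible'' Artin algebra structures) is not the one that works. The mechanism that does work is far more elementary and bypasses the Artin algebra entirely: the total space $\sC \subset \BP T(X)$ of the VMRT is canonically attached to $X$ once a component of $\RatCurves^n(X)$ is fixed, and a connected group preserves each such component; hence $\sC$ is tautologically invariant under the $G$-action, and for $u = h\cdot x_1$ in the open orbit the differential $dh_{x_1}$ carries $\sC_{x_1}$ isomorphically to $\sC_u$. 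No second-order information about the flows is needed.

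The second gap is in how you match the two structures. You invoke transitivity of $\Aut^0(X)$ on general points ``via Hwang--Mok prolongation/automorphism theory'' to move $x_1$ to $x_2$, and then conjugacy of the two tensors under the isotropy group $\Aut(X)_{x_2}$; neither is justified ($\Aut^0(X)$ of a Picard-number-one Fano manifold need not be large, and you give no reason the isotropy group acts transitively on the relevant data). What actually closes the argument is the special property of the vector group that your outline never uses, singled out in the paper as essential: every linear automorphism $f$ of $T_o(G)$ is the differential of a group automorphism $F$ of $G$ (Lemma \ref{l}). One takes $f$ to be any linear map matching $\sC_{x_1}$ with $\sC_{x_2}$ --- such a map exists precisely because both fibers are translates of the same canonical object --- and then $\varphi = a_2\circ F\circ a_1^{-1}: O_1\to O_2$ preserves VMRTs at \emph{every} point of $O_1$ because $F$ intertwines the translations (Proposition \ref{p.invariant}). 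Only then does Theorem \ref{t.CF} apply, together with the irreducibility of $\sC_x$ on the open orbit (Proposition \ref{p.irreducible}, which you also omit), to produce the global automorphism $\Phi$; the identity $\Phi\circ A_1 = A_2\circ(F\times\Phi)$ then extends from $G\times O_1$ to $G\times X$ by density. As written, your proof has a hole exactly where the theorem's content lies.
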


The meaning of VMRT will be explained in the next section.
A direct corollary is the following.

\begin{corollary}\label{t.c}
Let $X \subset \BP^N$ be a projective submanifold of Picard number
1 such that for a general point $x \in X$, there exists a line of
$\BP^N$ passing through $x$ and lying on $X$.  If $X$
is different from the projective space, then all EC-structures on
 $X$ are isomorphic. \end{corollary}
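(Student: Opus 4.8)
The plan is to derive the corollary from Theorem \ref{t.HT}, so the real work is to verify its hypotheses for such an $X$. First, $X$ is Fano: a line of $\BP^N$ lying on $X$ passes through a general point of $X$, so these lines dominate $X$ and a general one, $\ell$, is a free rational curve with $-K_X\cdot\ell=\deg(T_X|_\ell)>0$; since $\rk\Pic(X)=1$ the class of $-K_X$ is a rational multiple of the ample generator $H=\mathcal O_X(1)$, and the multiple is positive as $-K_X\cdot\ell>0$ while $H\cdot\ell>0$, so $-K_X$ is ample.

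Next I would exhibit the lines on $X$ as a family of minimal rational curves. A line has $\mathcal O_X(1)$-degree $1$, the least possible for a rational curve, so any irreducible component $\sK$ of the reduced Fano scheme of lines on $X$ whose members sweep out $X$ is a dominating family of rational curves with free general member; moreover the members of $\sK$ through a fixed point $x$ form a closed subscheme of the $\BP^{N-1}$ of lines of $\BP^N$ through $x$, hence a complete family. Thus $\sK$ is a family of minimal rational curves, and it only remains to show that its VMRT $\sC_x\subset\BP T_x(X)$ is smooth for $x$ general.

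For this I would invoke the standard facts about minimal rational curves through a general point: for $x$ outside a proper closed subset, $\sK_x$ is a smooth projective variety and every member of $\sK_x$ is a standard rational curve, with normal bundle $\mathcal O(1)^{\oplus p}\oplus\mathcal O^{\oplus(n-1-p)}$ where $p=\dim\sC_x$. Because the curves are lines, the tangent map $\tau_x\colon\sK_x\to\BP T_x(X)$, $[\ell]\mapsto[T_x\ell]$, is injective, a line through $x$ being determined by its point and its tangent direction there; and at a standard $\ell$ the differential of $\tau_x$ is the evaluation at $x$ of sections of $N_{\ell/X}\otimes\mathcal O_\ell(-x)\cong\mathcal O^{\oplus p}\oplus\mathcal O(-1)^{\oplus(n-1-p)}$, which is injective because the $\mathcal O^{\oplus p}$ part maps isomorphically onto its fibre and the $\mathcal O(-1)$ summands have no sections. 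A proper injective immersion from the smooth variety $\sK_x$ is a closed embedding, so $\sC_x=\tau_x(\sK_x)$ is smooth. With $X\ne\BP^n$ this is precisely the setting of Theorem \ref{t.HT}, which gives the conclusion.

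The step I expect to be the main obstacle is this last one, the smoothness of $\sC_x$, and within it the input that through a general point \emph{every} line on $X$ is a standard rational curve and not merely a general line: this is what makes $\tau_x$ an immersion at all points of $\sK_x$, and it rests on the general theory of minimal rational curves through a general point rather than on anything peculiar to the embedded situation.
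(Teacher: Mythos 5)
Your proposal is correct and follows the same route as the paper: reduce to Theorem \ref{t.HT} by checking that the lines on $X$ form a family of minimal rational curves with smooth VMRT at a general point. The paper simply cites Proposition 1.5 of \cite{Hw01} for this last fact, whereas you reprove it (injectivity of the tangent map for lines plus immersivity at standard members), so the content is the same with the black box opened.
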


It is well-known that when $X$ has a projective embedding
satisfying the assumption of Corollary \ref{t.c}, some family of
lines lying on $X$ gives a family of minimal rational curves, for
which the VMRT $\sC_x$ at a general point $x \in X$ is smooth
(e.g. by Proposition 1.5 of \cite{Hw01}). Thus Corollary \ref{t.c}
follows from Theorem \ref{t.HT}.   Corollary \ref{t.c} answers
Arzhantsev-Sharoyko's question on Grassmannians and also gives a
more conceptual answer to Hassett-Tschinkel's question on a smooth
quadric hypersurface, as a Grassmannian or a smooth hyperquadric
can be embedded into projective space with the required property.
In fact,  all examples of Fano manifolds of Picard number 1 known
to the authors, which admit EC-structures, can be embedded into
projective space with the property described in Corollary
\ref{t.c}. These include all irreducible Hermitian symmetric
spaces and some non-homogeneous examples coming from Proposition
6.14 of \cite{FH}.

 Once it is correctly formulated,
the proof of Theorem \ref{t.HT} is a simple consequence of the
Cartan-Fubini type extension theorem in \cite{HM01}, as we will
explain in the next section.

It is natural to ask whether an analogue of Theorem \ref{t.HT} holds for equivariant compactifications
of other linear algebraic groups. Part of our argument can be generalized easily.
 We need, however, an essential
feature of the vector group in Proposition \ref{p.invariant}: any linear automorphism of the tangent
space at the origin comes from an automorphism of the group. New ideas are needed to generalize this part
of the argument to other groups.

\section{Proof of Theorem \ref{t.HT}}

Let us recall the definition of VMRT (see \cite{Hw01} for
details).
 Let $X$ be a Fano manifold. For an irreducible component $\sK$
 of the normalized space ${\rm RatCurves}^n(X)$ of  rational curves on $X$ (see \cite{Kol} for definition),
  denote by $\sK_x \subset \sK$ the subscheme
 parametrizing members of $\sK$ passing through $x \in X$. We say that $\sK$ is a
 family of minimal rational curves if $\sK_x$ is nonempty and projective for a general point
 $x \in X$. By a celebrated result of Mori, any Fano manifold has a family of minimal rational curves.
 Fix a family $\sK$ of minimal rational curves. Denote by $\rho: \sU \to \sK$ and
 $\mu: \sU \to X$ the universal family. Then $\rho$ is a $\BP^1$-bundle and for a general point
 $x \in X$, the fiber $\mu^{-1}(x)$ is complete. By associating a smooth point $x$ of a curve in $X$
 to its tangent direction in $\BP T_x(X)$, we have a rational map $\tau: \sU \dasharrow \BP T(X)$. The proper image of $\tau$ is denoted by $\sC \subset \BP T(X)$ and called the total space of VMRT.  For a general
 point $x \in X$, the fiber $\sC_x \subset \BP T_x(X)$ is called the VMRT of $\sK$ at $x$. In other words,
  $\sC_x$ is the closure
 of the union of tangent directions to members of $\sK_x$ smooth at $x$.

 The concept of VMRT is useful to us via the Cartan-Fubini type extension theorem, Theorem 1.2 in \cite{HM01}.
We will quote a simpler version, Theorem 6.8 of \cite{FH}, where
the condition $\sC_x \neq \BP T_x(X)$ is used instead of our
condition `$X_1, X_2$ different from projective space'. It is
well-known that these two conditions are equivalent from
\cite{CMS}.

 \begin{theorem}\label{t.CF}
 Let $X_1, X_2$ be two Fano manifolds of Picard number 1, different from projective spaces. Let $\sK_1$ (resp. $\sK_2$) be a family of minimal
 rational curves on $X_1$ (resp. $X_2$). Assume that for a general point $x \in X_1$, the VMRT $\sC_x \subset
 \BP T_x(X_1)$ is irreducible and smooth. Let $U_1 \subset X_1$ and $U_2 \subset X_2$ be connected open subsets.
 Suppose there exists a biholomorphic map $\varphi: U_1 \to U_2$ such that for a general point $x \in U_1$, the
 differential $d\varphi_x: \BP T_x(U_1) \to \BP T_{\varphi(x)}(U_2)$ sends $\sC_x$ isomorphically to $\sC_{\varphi(x)}$. Then there exists a biregular morphism $\Phi: X_1 \to X_2$ such that $\varphi= \Phi|_{U_1}.$ \end{theorem}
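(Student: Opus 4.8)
This is the Cartan--Fubini type extension theorem of Hwang--Mok, and my plan is to follow their approach: reconstruct the minimal rational curves of $X_1$ and $X_2$ from their VMRTs, analytically continue $\varphi$ along these curves and along chains of them, and then extend the resulting map across a small indeterminacy locus using the projectivity of $X_2$. First I would observe that the total space $\sC \subset \BP T(X_1)$ of VMRT carries a natural tautological distribution whose projected integral curves through a general point $x$ are exactly the members of $\sK_1$ passing through $x$, the tangent direction $[T_x C]\in \sC_x$ being the base point of the corresponding leaf; since $X_1\neq \BP^n$ we have $\sC_x \neq \BP T_x(X_1)$ (using the equivalence from \cite{CMS}), so this structure is nontrivial. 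By hypothesis the lift of $\varphi$ to $\BP T(X_1)$ carries $\sC|_{U_1}$ isomorphically onto $\sC|_{U_2}$, hence carries the tautological distribution of $X_1$ to that of $X_2$; therefore, for a general $x\in U_1$ and a general member $C$ of $\sK_1$ through $x$, the image $\varphi(C\cap U_1)$ is a germ of a member of $\sK_2$.

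The local heart of the argument, which I expect to be the main obstacle, is to promote this to a genuine extension: if $C$ is a general member of $\sK_1$ meeting $U_1$, then $\varphi$ should extend to a holomorphic map on a connected neighborhood of $C$ in $X_1$, carrying it biholomorphically onto a neighborhood of the corresponding curve $C'$ of $\sK_2$ in $X_2$. This rests on the rigidity of the VMRT structure along $C$: the normal bundle $N_{C/X_1}$ splits as a sum of line bundles of degree $\geq 0$ whose positive summands $\sO(2)\oplus\sO(1)^{\oplus\dim\sC_x}$ are governed by the second fundamental form of $\sC_x\subset \BP T_x(X_1)$, and one has to show that the germ along $C$ of the pair (VMRT, tautological distribution) admits no nontrivial continuous deformation, so that a biholomorphism preserving this structure near a single point of $C$ is forced to extend along all of $C$ and to match up the families of minimal rational curves through $y$ and through $\varphi(y)$ for $y$ near $C$. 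Making this precise requires a delicate analysis of deformations of standard minimal rational curves fixing a point; this is exactly where the Cartan--Fubini rigidity is used.

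Next I would globalize. Since $X_1$ has Picard number $1$, a general point of $X_1$ is joined to a point of $U_1$ by a chain of general members of $\sK_1$; iterating the local extension propagates $\varphi$ analytically along such chains, producing an a priori multivalued, locally biholomorphic map on a Zariski-dense open subset $X_1^\circ\subset X_1$. To see that it is single-valued, I would run a monodromy argument on the universal family $\rho\colon\sU\to\sK_1$: as $X_1$ is Fano it is simply connected, and the continuations along two chains joining the same pair of general points agree because such chains can be deformed into one another. This yields a genuine holomorphic map $\Phi_0\colon X_1^\circ\to X_2$ with $\Phi_0|_{U_1\cap X_1^\circ}=\varphi$.

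Finally, since $X_2$ is projective, $\Phi_0$ extends to a meromorphic map $\Phi\colon X_1\dashrightarrow X_2$; a further analysis of the continuation along minimal rational curves meeting the indeterminacy locus --- using that $X_1$ is smooth and $\Phi_0$ is locally biholomorphic, so contracts no divisor --- shows that $\Phi$ is in fact a morphism $X_1\to X_2$. Running the same construction with $\varphi^{-1}$ in place of $\varphi$ produces a morphism $\Psi\colon X_2\to X_1$ with $\Psi\circ\Phi=\mathrm{id}$ on the open set $U_1$, hence identically; therefore $\Phi$ is biregular with $\Phi|_{U_1}=\varphi$, which is the assertion.
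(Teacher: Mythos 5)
This statement is not proved in the paper at all: it is quoted verbatim as Theorem 6.8 of \cite{FH}, which is itself a specialization of the Cartan--Fubini extension theorem, Theorem 1.2 of \cite{HM01}. So there is no internal proof to compare against; the authors deliberately use it as a black box. Your outline does reproduce the architecture of the Hwang--Mok argument --- (a) a VMRT-preserving local biholomorphism sends germs of minimal rational curves to germs of minimal rational curves, (b) it extends holomorphically along an entire minimal rational curve, (c) one globalizes along chains of such curves using Picard number~1, and (d) one passes from a meromorphic extension to a biregular morphism by running the construction for $\varphi^{-1}$ as well. That is the correct skeleton.

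However, as a proof your text has a genuine gap, and you acknowledge it yourself: steps (a) and (b) are asserted, not proved, and they are precisely the content of the theorem. For (a) you need to show that the lifted map preserves the tautological foliation on $\sC|_{U_1}$, which is not automatic from preserving the fibers $\sC_x$; in \cite{HM01} this requires the nondegeneracy of the second fundamental form of $\sC_x \subset \BP T_x(X_1)$ (guaranteed here by smoothness of $\sC_x$ and $\sC_x \neq \BP T_x(X_1)$), and the argument that an isomorphism of the cones respecting the distribution must match up integral curves. For (b), ``the germ along $C$ of the pair admits no nontrivial continuous deformation, so a biholomorphism preserving this structure near a point of $C$ is forced to extend along all of $C$'' is exactly the rigidity statement one must establish, via the splitting type $\sO(2)\oplus\sO(1)^{\oplus p}\oplus\sO^{\oplus q}$ of $f^*T(X_1)$ for a standard curve and a parametrized continuation along the universal family; writing ``this is exactly where the Cartan--Fubini rigidity is used'' defers the theorem to itself. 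The univalence argument in step (c) is also looser than what \cite{HM01} actually does (they work on a parameter space of chains rather than invoking simple connectedness of $X_1$ directly). In short: your proposal is a faithful road map of the cited proof, but it is a road map, not a proof; if the intent was to reprove Theorem~\ref{t.CF} rather than cite it, the local differential-geometric core still has to be supplied.
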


 The irreducibility of $\sC_x$ is automatic
 in our case. We prove a slightly more general statement.

\begin{proposition}\label{p.irreducible}
Let $X$ be a Fano manifold. Let $H$ be a connected closed subgroup of a connected algebraic group $G$.
Suppose that there exists a $G$-action on $X$ with an open orbit $O\subset X$ such that $H$ is the isotropy
subgroup of a point $o \in O$. Then for any
family of minimal rational curves on $X$, the VMRT $\sC_x$ at  $x \in O$
is irreducible. \end{proposition}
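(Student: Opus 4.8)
The plan is to combine the homogeneity of $O$ with the irreducibility of the \emph{total space} $\sC$ of VMRT. First I would record that $\sC$ is irreducible: by definition $\sC$ is the closure of the image of the rational map $\tau$ on $\sU$, and $\sU$ is irreducible, being a $\BP^1$-bundle over $\sK$, which is itself irreducible (an irreducible component of $\RatCurves^n(X)$). Next, the $G$-action on $X$ induces a $G$-action on $\RatCurves^n(X)$, and since $G$ is connected it must preserve the component $\sK$; hence $G$ acts on the universal family $\sU$ and, compatibly with the natural $G$-action on $\BP T(X)$ by differentials, on $\sC$, so that $\sC\subset\BP T(X)$ is a $G$-invariant subvariety. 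Let $p\colon\sC\to X$ be the natural projection; it is $G$-equivariant and dominant (the VMRT is nonempty at a general point of $X$). Then $\sC_O:=p^{-1}(O)$ is a nonempty $G$-invariant open subset of $\sC$, hence dense and irreducible, and $p$ maps $\sC_O$ onto $O$, its image being a nonempty $G$-invariant subset of the single orbit $O$.

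Now fix $o\in O$ with isotropy group $H$, and identify $O=G/H$. Since $\sC_O$ carries a $G$-action together with a $G$-equivariant morphism $p$ onto the homogeneous space $G/H$, it is an associated bundle of the principal $H$-bundle $G\to G/H$: there is a $G$-equivariant isomorphism $\sC_O\cong G\times_H\sC_o:=(G\times\sC_o)/H$, given by $[g,z]\mapsto g\cdot z$, where $\sC_o:=p^{-1}(o)$ carries its natural $H$-action (that the map is an isomorphism can be checked by faithfully flat descent along $G\to G/H$, over which both sides become $G\times\sC_o$). I would then count irreducible components. Let $Z_1,\dots,Z_k$ be the irreducible components of $\sC_o$; since a product of irreducible varieties is irreducible, the irreducible components of $G\times\sC_o$ are exactly $G\times Z_1,\dots,G\times Z_k$. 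The $H$-action used to form the quotient permutes these components, and as $H$ is \emph{connected} it fixes each one; hence each $G\times_H Z_i:=(G\times Z_i)/H$ is a well-defined irreducible closed subvariety of $G\times_H\sC_o$, and $G\times_H\sC_o=\bigcup_{i=1}^{k}G\times_H Z_i$. Since $\sC_O\cong G\times_H\sC_o$ is irreducible, one of them, say $G\times_H Z_{i_0}$, equals the whole space; restricting the bundle projection $G\times_H\sC_o\to G/H$ to the fibres over $o$ gives $\sC_o=Z_{i_0}$, so $k=1$ and $\sC_o$ is irreducible. Finally, for an arbitrary $x\in O$ write $x=g\cdot o$ with $g\in G$; the differential at $o$ of the automorphism $g$ of $X$ is a linear isomorphism $\BP T_o(X)\to\BP T_x(X)$ carrying $\sC\cap\BP T_o(X)$ onto $\sC\cap\BP T_x(X)$, because $\sC$ is $G$-invariant, so this fibre --- which at a general $x$ is the VMRT --- is irreducible too.

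The single place where the hypotheses are used essentially, and the only real obstacle, is the passage from irreducibility of the total space $\sC_O$ to that of the one fibre $\sC_o$: here the connectedness of $H$ is indispensable, since otherwise $H$ could permute $\{Z_1,\dots,Z_k\}$ transitively and $\sC_O\cong G\times_H\sC_o$ would remain irreducible while $\sC_o$ is not. A minor point worth stating explicitly is that ``the VMRT $\sC_x$ at $x\in O$'' is to be read as the fibre $\sC\cap\BP T_x(X)$ for \emph{every} $x\in O$, not just for a general one; this is harmless, as all these fibres are $G$-translates of one another and hence isomorphic to the genuine VMRT taken at a general point.
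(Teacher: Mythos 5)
Your argument is correct, but it runs along a genuinely different track from the paper's. The paper works upstairs on the universal family $\mu:\sU\to X$: it picks $\alpha\in\mu^{-1}(x)$, observes that the fibres of $G\cdot\alpha\to O$ are translates of $H\cdot\alpha$ and hence irreducible because $H$ is connected, and then runs a Stein factorization $\sU\to\sU'\to X$ together with a dimension count ($\eta(G\cdot\alpha)$ is constructible of full dimension in $\sU'$) to force the finite part to be birational; this shows the fibres of $\mu$ are connected, hence $\sK_x$ and so $\sC_x$ is irreducible. You instead work directly on the total space $\sC\subset\BP T(X)$, identify $\sC|_O$ with the associated bundle $G\times_H\sC_o$, and count irreducible components, using connectedness of $H$ only to see that each component of the fibre $\sC_o$ is $H$-stable. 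Both proofs exploit exactly the same two hypotheses ($G$ connected with a dense orbit, $H$ connected), but the mechanisms differ: the paper's route avoids forming any quotient, at the price of passing through connectedness of $\sK_x$ (upgrading connectedness to irreducibility of $\sC_x$ quietly uses that $\sK_x$ is smooth, or at least that $\sC_x$ is the image of an irreducible set, at a general point); your route gives irreducibility of the fibre of $\sC\to X$ in one stroke and is conceptually cleaner --- a homogeneous fibre bundle with connected structure group and irreducible total space has irreducible fibres --- but it does require knowing that $G\times_H\sC_o$ exists as a variety and that the comparison map to $\sC|_O$ is an isomorphism. Your appeal to faithfully flat descent along $G\to G/H$ handles the latter, and the former is unproblematic here because $H$ acts on $\sC_o$ through the linear isotropy representation on $T_o(X)$, so the associated bundle sits inside $\BP$ of the vector bundle $G\times_H T_o(X)$; it would be worth saying this explicitly, since for a general $H$-variety the existence of $G\times_H(-)$ as a scheme is not automatic. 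Your closing remarks (why the fibre at every $x\in O$, not just a general one, computes the VMRT, and where connectedness of $H$ is indispensable) are accurate and match the role these hypotheses play in the paper's own argument.
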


\begin{proof}
 For a family $\sK$ of minimal rational
 curves on $X$, let $\rho: \sU \to \sK$ and $\mu: \sU \to X$ be the universal family.
 Since  $G$ is connected, we have an induced $G$-action on $\sK$ and $\sU$ such that
 $\rho$ and $\mu$ are $G$-equivariant.  For $\alpha \in \mu^{-1}(x)$, denote by $G\cdot
\alpha$ the $G$-orbit of $\alpha$ and by $\mu_{\alpha}: G\cdot \alpha \to O$ the restriction
of $\mu$ to the orbit. Then $\mu_{\alpha}$ is surjective.
Note  that $\mu_{\alpha}^{-1}(x) = H \cdot \alpha.$ Since $H$ is connected,
we see that $\mu_{\alpha}$ has irreducible fibers.

Now to prove the irreducibility of $\sC_x$, it suffices to prove the irreducibility of $\sK_x$.
This is equivalent to showing that the fibers of $\mu: \sU \to X$ are connected.
Suppose not. Let
$$\sU \ \stackrel{\eta}{\longrightarrow} \ \sU' \ \stackrel{\nu}{\longrightarrow} \ X$$ be the Stein factorization, i.e.,   $\eta$ has connected fibers and
$\nu$ is a finite surjective morphism. By our assumption, $\nu$ is not birational.  The image
$\eta (G \cdot \alpha) \subset \sU'
\to X$ is a constructible subset of dimension equal to $\sU'$. Thus it contains a
Zariski dense open subset of $\sU'$.  Since $\mu_{\alpha}: G \cdot \alpha \to O$
 has irreducible fibers, we see that $\nu|_{\eta (G \cdot \alpha)}$ is birational over $O$.
 It follows that $\nu$ is birational, a contradiction. \end{proof}

Although the next lemma is straight-forward, this property of the vector group is essential for our
proof of Theorem \ref{t.HT}.

 \begin{lemma}\label{l}
Let $G$ be the vector  group and let $f$ be a linear automorphism of the tangent space $T_o(G)$ at   the identity $o \in G$. Then  there exists a group automorphism $F: G \to G$  such that $dF_o = f$ as endomorphisms of $T_o(G)$. In particular, denoting by  $r_h: G\to G$ the multiplication (=translation) by $h \in G,$ we have $dF_h\circ d r_h = d r_{F(h)} \circ d F_o$ as homomorphisms from $T_o(G) \mbox{ to } T_{F(h)}(G).$
   \end{lemma}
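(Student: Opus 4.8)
The plan is to use the linear structure on $G=\C^n$ directly, with no rational-curve machinery needed. Since $G$ is a complex vector space, its tangent bundle is canonically trivial: for every $h\in G$ there is a canonical identification $T_h(G)\cong\C^n$, and under these identifications the differential $d(r_h)_g$ of any translation $r_h\colon g\mapsto g+h$ is the identity map of $\C^n$, because $r_h$ is an affine map with linear part the identity. In particular $T_o(G)$ is canonically $\C^n$, and $GL(T_o(G))=GL(\C^n)$.

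First I would simply set $F:=f$, now viewing the linear automorphism $f\in GL(\C^n)$ as a self-map $F\colon G\to G$. Being linear it is additive, $F(g_1+g_2)=F(g_1)+F(g_2)$, hence a homomorphism of the additive group $G$; being a linear isomorphism it is bijective; so $F$ is a group automorphism. Since the differential at the origin of a linear endomorphism of $\C^n$ is that same endomorphism under the canonical identification $T_o(G)\cong\C^n$, we get $dF_o=f$, the first assertion.

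For the ``in particular'' clause I would first record the two identities $F(o)=o$ (as $F$ is linear) and $F\circ r_h=r_{F(h)}\circ F$ as maps $G\to G$; the latter holds because for all $g\in G$ one has $F(g+h)=F(g)+F(h)$, i.e. $F(r_h(g))=r_{F(h)}(F(g))$. Differentiating this equality at $o\in G$ by the chain rule, and using $r_h(o)=h$ and $F(o)=o$ to keep track of base points, gives $dF_h\circ d(r_h)_o=d(r_{F(h)})_o\circ dF_o$, which is exactly the claimed equality of homomorphisms $T_o(G)\to T_{F(h)}(G)$.

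There is essentially no obstacle here: as the paper notes, the statement is straightforward, and the only point requiring a little care is the bookkeeping of base points and the canonical trivialization $TG\cong G\times\C^n$, so that in each composition the differentials are glued over the correct tangent spaces. The actual content of the lemma is precisely the feature \emph{special} to the vector group that every element of $GL(T_o(G))$ is realized by a genuine automorphism of $G$ — the property that, as remarked in the introduction, obstructs a direct generalization of Theorem \ref{t.HT} to other linear algebraic groups.
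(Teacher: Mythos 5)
Your proof is correct and follows essentially the same route as the paper's: identify $f$ with an additive automorphism $F$ of $G$ via the canonical isomorphism $T_o(G)\cong G$, note that the automorphism property reads $F\circ r_h = r_{F(h)}\circ F$, and differentiate at the identity. The only difference is that you spell out the base-point bookkeeping more explicitly, which the paper leaves implicit.
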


\begin{proof}
The group automorphism $F$ is just the linear automorphism $f$ viewed as an automorphism of $G$ via the `exponential map' $T_o(G) \cong G$.  We can rewrite the group automorphism property as  $F \circ r_h (g)= r_{F(h)} \circ F (g)$ for any $g,h \in G.$   By taking differentials on both sides, we get $dF_{h} \circ d r_h = d r_{F(h)} \circ d F_o.$ \end{proof}

\begin{proposition}\label{p.invariant}
Let $G$ be the vector group.
Let $X_1, X_2$ be two  projective manifolds equipped with  EC-structures
  $A_i: G \times X_i \to X_i, i=1,2.$
Fix a general point $x_i \in X_i$ such that the orbit $O_i:= A_i(
G, x_i)$  is Zariski open in $X_i$. Let $a_i: G\to O_i$ be the
biregular morphism given by $A_i(\cdot, x_i)$ and let $a^{-1}_{i}:
O_i \to G$  be its inverse. Suppose we have $\sZ_i \subset \BP
T(X_i), i=1,2,$ closed subvarieties dominant over $X_i$ such that
\begin{itemize} \item[(1)] $\sZ_i$ is invariant under the
$G$-actions $A_i$, i.e, for any $g \in G$,
the differential $dg_{x_i}:  \BP T_{x_i}(X_i) \to  \BP T_{g \cdot
x_i}(X_i)$ sends $\sZ_i \cap \BP T_{x_i} (X_i)$ isomorphically to
$\sZ_i \cap \BP T_{g \cdot x_i}(X_i)$; and \item[(2)]  the two
projective varieties $\sZ_1 \cap \BP T_{x_1}(X_1) $ and $\sZ_2
\cap \BP T_{x_2}(X_2)$ are projectively isomorphic. \end{itemize}
Then there exists a group automorphism $F$ of $G$
  such that the biholomorphic map
  $\varphi: O_1 \to O_2$ defined by $\varphi= a_2 \circ F \circ a_1^{-1}$ satisfies
 \begin{itemize} \item[(i)] $\varphi(x_1) = x_2$; \item[(ii)]  $\varphi(g \cdot x_1) =
 F(g) \cdot \varphi(x_1)$ for any $g\in G$; and \item[(iii)] the differential $$d \varphi_{u}: \BP T_{u}(X_1) \to \BP
T_{\varphi(u)}(X_2)$$ sends $\sZ_1 \cap \BP T_u(X_1)$ isomorphically
to $\sZ_2 \cap \BP T_{\varphi(u)}(X_2)$ for all $u \in O_1$.\end{itemize}
\end{proposition}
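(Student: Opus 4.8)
The plan is to prescribe $F$ by its differential at the identity, chosen so that it transports the projective isomorphism furnished by hypothesis (2) to the Lie algebra of $G$, and then to spread this fiberwise statement along the orbit by exploiting the $G$-equivariance in hypothesis (1). The first observation is that properties (i) and (ii) cost nothing: writing $o\in G$ for the identity, we have $a_i(o)=o\cdot x_i=x_i$, hence $a_i^{-1}(x_i)=o$, and $F(o)=o$ for any group automorphism $F$, so $\varphi(x_1)=a_2(F(a_1^{-1}(x_1)))=a_2(o)=x_2$; likewise $a_1^{-1}(g\cdot x_1)=g$ gives $\varphi(g\cdot x_1)=a_2(F(g))=F(g)\cdot x_2=F(g)\cdot\varphi(x_1)$. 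More usefully, denoting by $A_i^g$ the automorphism $A_i(g,\cdot)$ of $X_i$ and by $r_g$ the translation by $g$ on $G$, one has $A_i^g\circ a_i=a_i\circ r_g$ (since $G$ is abelian), and combining this with the group-automorphism identity $F\circ r_g=r_{F(g)}\circ F$ used in Lemma \ref{l} yields the functional relation $\varphi\circ A_1^g=A_2^{F(g)}\circ\varphi$ for all $g\in G$; this is what will carry the equivariance into the projectivized tangent bundle.

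Next I would fix the automorphism $F$. Choose a linear isomorphism $\ell\colon T_{x_1}(X_1)\to T_{x_2}(X_2)$ whose projectivization realizes the projective isomorphism of hypothesis (2) between $\sZ_1\cap\BP T_{x_1}(X_1)$ and $\sZ_2\cap\BP T_{x_2}(X_2)$. Pull it back to $T_o(G)$ by setting $f:=(d(a_2)_o)^{-1}\circ\ell\circ d(a_1)_o$, a linear automorphism of $T_o(G)$, and invoke Lemma \ref{l} to get a group automorphism $F$ of $G$ with $dF_o=f$. A chain-rule computation along $\varphi=a_2\circ F\circ a_1^{-1}$, using $a_i^{-1}(x_i)=o$, $F(o)=o$, and $dF_o=f$, then collapses to $d\varphi_{x_1}=\ell$, so (iii) holds at $u=x_1$.

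Finally I would propagate to an arbitrary $u\in O_1$. Write $u=h\cdot x_1$ for some $h\in G$ and differentiate $\varphi\circ A_1^h=A_2^{F(h)}\circ\varphi$ at $x_1$, obtaining $d\varphi_u=d(A_2^{F(h)})_{x_2}\circ\ell\circ(d(A_1^h)_{x_1})^{-1}$. Projectivizing and reading off the three factors in turn: by hypothesis (1) for $X_1$ (with $g=h$), $(d(A_1^h)_{x_1})^{-1}$ carries $\sZ_1\cap\BP T_u(X_1)$ onto $\sZ_1\cap\BP T_{x_1}(X_1)$; by the choice of $\ell$, $\BP\ell$ carries $\sZ_1\cap\BP T_{x_1}(X_1)$ onto $\sZ_2\cap\BP T_{x_2}(X_2)$; and by hypothesis (1) for $X_2$ (with $g=F(h)$), $d(A_2^{F(h)})_{x_2}$ carries $\sZ_2\cap\BP T_{x_2}(X_2)$ onto $\sZ_2\cap\BP T_{F(h)\cdot x_2}(X_2)=\sZ_2\cap\BP T_{\varphi(u)}(X_2)$. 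Composing gives (iii) for every $u\in O_1$.

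All the computations here are routine; the one genuinely indispensable ingredient — and the only place the hypothesis that $G$ is the vector group enters — is Lemma \ref{l}, which promotes an arbitrary linear isomorphism at the base point to a global group automorphism with the prescribed differential. The only point requiring care is the bookkeeping with the identifications of tangent spaces under the differentials of the $A_i^g$ and the $r_g$, so that the functional relation $\varphi\circ A_1^g=A_2^{F(g)}\circ\varphi$ is assembled with the correct base points and indices; once that is in place, steps two and three are essentially formal.
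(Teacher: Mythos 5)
Your proposal is correct and follows essentially the same route as the paper: choose the linear map realizing hypothesis (2), transport it to $T_o(G)$ via $da_1, da_2$, promote it to a group automorphism $F$ by Lemma \ref{l}, and propagate along the orbit using hypothesis (1). Your packaging of the propagation step through the intertwining identity $\varphi\circ A_1^g=A_2^{F(g)}\circ\varphi$ is just a tidier rewriting of the paper's chain of differentials through $dr_h$ and $dr_{F(h)}$.
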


\begin{proof}
To simplify the notation, let us write $Z_{x}:= \sZ_i \cap \BP T_{x}(X_i)$ when $x \in X_i$.
 Denote by $d a_{i}: T(G) \to T(O_i)$ the
isomorphism of vector bundles given  by the differential of $a_{i}$. By the
assumption (2),   there exists a linear automorphism $f \in {\bf
GL}(T_o(G))$ such that $$ d a_{2} \circ f \circ
d a_{1}^{-1} : T_{x_1}(X_1) \to T_{x_2}(X_2)$$ sends $Z_{x_1}$ isomorphically to  $Z_{x_2}$, i.e.
 $$ f (d a_{1}^{-1}(Z_{x_1})) = d a_{2}^{-1}(Z_{x_2}).$$
Let $F$ be the group automorphism of $G$ induced by $f$ in Lemma \ref{l}.
 It is clear that $\varphi = a_2 \circ F \circ a_1^{-1}$ is a  biregular map satisfying (i).
From $G$-equivariance of $a_1$ and $a_2$,
 $$\varphi (g \cdot x_1) = a_2 \circ F \circ a_1^{-1} (g \cdot x_1) =  a_2 \circ F(g \cdot o) = a_2 (F(g) \cdot o)
= F(g) \cdot a_2(o) = F(g) \cdot x_2.$$ This proves (ii).
  For any $u \in O_1$, let $h= a_1^{-1}(u)$ such that $u = h \cdot x_1$. Then using Lemma \ref{l} and the condition (2),
  \begin{eqnarray*} d \varphi_{u} (Z_u) & = & d a_2 \circ d F \circ d a_1^{-1}(Z_{h\cdot x_1})  \\ &=&
  d a_2 \circ d F \circ d r_h \circ d a_1^{-1}(Z_{x_1}) \\ &=& d a_2 \circ d r_{F(h)} \circ d F \circ d a_1^{-1}(Z_{x_1}) \\ &=&   d a_2 \circ d r_{F(h)} \circ f ( a_1^{-1}(Z_{x_1})) \\   &=& d a_2 \circ d r_{F(h)} \circ d a_2^{-1}(Z_{x_2}) \\ & = & Z_{F(h) \cdot x_2} = Z_{\varphi (u)}. \end{eqnarray*} The last equality is
  from $F(h)\cdot x_2 = a_2 \circ F(h) = a_2 \circ F (a_1^{-1} (u)) = \varphi (u).$
This shows (iii).
  \end{proof}

%

\begin{proof}[Proof of Theorem \ref{t.HT}]
Let $A_i: G \times X \to X, i=1,2,$ be two EC-structures on $X$ with open orbits
$O_i \subset X$.
 Fix a family of minimal rational curves on $X$ with
  the total space of VMRT $\sC \subset \BP T(X)$.
  Set $X_1=X_2=X$ and $\sZ_1=\sZ_2=\sC$.
 The conditions (1) and (2) of Proposition \ref{p.invariant}
  are satisfied. We obtain two open subsets $O_1, O_2 \subset X$ with
 a biholomorphism $\varphi: O_1 \to O_2$ whose differential
 sends $\sC_u$ to $\sC_{\varphi(u)}$ for all $u \in O_1$.
 By Proposition \ref{p.irreducible},
  we can apply Theorem \ref{t.CF} to obtain
 an automorphism $\Phi: X \to X$ satisfying $\Phi|_{O_1} = \varphi.$
  From the property (ii) of $\varphi$ in
  Proposition \ref{p.invariant},
$$\Phi \circ A_1 |_{G \times O_1}= A_2 \circ (F \times \Phi) |_{G \times O_1}.$$
Then the equality $\Phi \circ A_1= A_2 \circ  (F \times \Phi)$ must hold on the whole
$G \times X$. Thus the two EC-structures $A_1$ and $A_2$ are
isomorphic. \end{proof}

\bigskip
Baohua Fu

Institute of Mathematics, AMSS, Chinese Academy of Sciences, 55
ZhongGuanCun

East Road, Beijing, 100190, China and
 Korea
Institute for Advanced Study,

 Hoegiro 87, Seoul, 130-722, Korea

 bhfu@math.ac.cn

\bigskip
Jun-Muk Hwang

 Korea Institute for Advanced Study, Hoegiro 87,

Seoul, 130-722, Korea

jmhwang@kias.re.kr
\end{document}